\begin{document}
\title[The fifth-order KPII equation]
{Global well-posedness for the fifth-order Kadomtsev-Petviashvili II equation in anisotropic Gevrey spaces}

\author[A. Boukarou, D. O. da Silva, K. Guerbati, Kh. Zennir]
{A. Boukarou, D. O. da Silva, K. Guerbati and Kh. Zennir}
\address{Aissa Boukarou \newline
	Laboratoire de Math\'ematiques et Sciences appliqu\'ees Universit\'e de Ghardaia, Algeria}
\email{boukarou.aissa@univ-ghardaia.dz}
\address{Daniel Oliveira da Silva \newline
	Department of Mathematics, Nazarbayev University, Nur-Sultan, Kazakhstan}
\email{daniel.dasilva@nu.edu.kz}
\address{Kaddour Guerbati \newline
	Laboratoire de Math\'ematiques et Sciences appliqu\'ees Universit\'e de Ghardaia, Algeria}
\email{guerbati\_k@yahoo.com}
\address{Khaled Zennir \newline
Department of Mathematics, College of
Sciences and Arts, Qassim University, Ar-Rass, Kingdom of Saudi
Arabia}
\email{k.zennir@qu.edu.sa}

\subjclass[2010]{35Q35, 35Q53}
\keywords{KPII equation, Gevrey space, radius of spatial analyticity}
\begin{abstract}
We show that the fifth-order Kadomtsev-Petviashvili II equation is globally well-posed in an anisotropic Gevrey space, which complements earlier results on the well-posedness of this equation in anisotropic Sobolev spaces.
\end{abstract}
\maketitle
\numberwithin{equation}{section}
 \newtheorem{thm}{Theorem}[section]
 \newtheorem{cor}[thm]{Corollary}
 \newtheorem{lem}[thm]{Lemma}
 \newtheorem{prop}[thm]{Proposition}
 \theoremstyle{definition}
 \newtheorem{defn}[thm]{Definition}
 \theoremstyle{remark}
 \newtheorem{rem}[thm]{Remark}
 \newtheorem*{ex}{Example}
 \numberwithin{equation}{section}
\allowdisplaybreaks
\section{Introduction}
 The fifth-order Kadomtsev-Petviashvili II (KPII) equation is the partial differential equation
 \begin{equation} 
 \left\{
 \begin{array}{ll}\label{p1}
 \partial_{t}u - \partial_{x}^{5}u + \partial^{-1}_{x} \partial^{2}_{y} u + u \partial_{x} u = 0, \\
 u(x,y,0) = f(x,y), 
 \end{array}
 \right.
 \end{equation}
 where $u = u(x,y,t)$ and $(x,y,t) \in \mathbb{R}^{3}$.  This equation belongs to the class of KP equations, which are models for the propagation of long dispersive nonlinear waves which are essentially unidirectional and have weak transverse effects.  Due to the asymmetric nature of the equation with respect to the spatial derivatives, it is natural to consider the Cauchy problem for \eqref{p1} with initial data in the anisotropic Sobolev spaces $H^{s_1, s_2} (\mathbb{R}^{2})$, defined by the norm
 \[
 \| u \|_{H^{s_1, s_2}(\mathbb{R}^{2})} = \left( \int \langle \xi \rangle^{2 s_1} \langle \eta \rangle^{2 s_2} | \tilde{u}(\xi, \eta) |^{2} d\xi d\eta \right)^{1/2},
 \]
 where $\tilde{f}$ denotes the spatial Fourier transform
\[
\tilde{f}(\xi, \eta, t) = \int_{\mathbb{R}^{2}} f(x,y,t) e^{-i (x \xi + y \eta)}\ dx dy.
\]
For initial data in these spaces, Saut and Tzvetkov \cite{ST2000} proved that the problem \eqref{p1} is locally well-posed for initial data $f$ in $H^{0,0}(\mathbb{R}^{2}) = L^{2}(\mathbb{R}^{2})$.  This result was improved in \cite{ILM2006} by Isaza, L\'opez and Mej\'{i}a, who reduced the minimal regularity for initial data to $s_{1} > -5/4$ and $s_{2} \geq 0$.  They also showed that the problem is globally well-posed in $H^{s_{1},0}(\mathbb{R}^{2})$ with $s_{1} > -4/7$.
 
In the present work, we will consider the Cauchy problem for equation \eqref{p1} with initial data in an anisotropic Gevrey space $G^{\sigma_1, \sigma_2}(\mathbb{R}^{2})$, which we define as the completion of the Schwartz functions with respect to the norm
\[
\| f \|_{G^{\sigma_1, \sigma_2}(\mathbb{R}^{2})} = \left(\int_{\mathbb{R}^{2}} e^{2 \sigma_1 |\xi|} e^{2 \sigma_2 |\eta|} | \tilde{f}(\xi,\eta) |^{2} d\xi d\eta\right)^{1/2}.
\]
The primary reason for considering initial data in these spaces is because of the following theorem:
\begin{prop}[Paley-Wiener Theorem]\label{wiener}
Let $\sigma > 0$, and suppose $f \in L^{2}(\mathbb{R})$.  Then the following are equivalent:
\begin{enumerate}
\item The function $f$ is the restriction to the real line of a function $F$ which is holomorphic in the strip 
$$ S_{\sigma} = \lbrace x+iy \in \mathbb{C}:\  | y | < \sigma \rbrace,$$ and satisfies
\[
\sup_{| y | < \sigma} \| F(x+iy) \|_{L^{2}}< \infty.
\]
\item $e^{\sigma |\xi|} \hat{f}(\xi) \in L^{2}_{\xi}(\mathbb{R})$.
\end{enumerate}
\end{prop}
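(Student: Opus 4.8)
The plan is to move between the function $F$ and the weighted Fourier data $e^{\sigma|\xi|}\hat f$ via the Fourier inversion formula, handling the two implications separately; throughout I use the convention $\hat f(\xi) = \int_{\mathbb{R}} f(x) e^{-ix\xi}\,dx$, so that inversion and Plancherel carry the factor $1/2\pi$. For the direction $(2)\Rightarrow(1)$, I would define the candidate extension directly by
\[
F(x+iy) = \frac{1}{2\pi}\int_{\mathbb{R}} \hat f(\xi)\, e^{i(x+iy)\xi}\, d\xi = \frac{1}{2\pi}\int_{\mathbb{R}} \hat f(\xi)\, e^{ix\xi}\, e^{-y\xi}\, d\xi,
\]
and first check absolute convergence for every $z = x+iy$ with $|y| < \sigma$. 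Since $|e^{-y\xi}| \le e^{|y||\xi|}$, writing $|\hat f(\xi)|\,e^{|y||\xi|} = \big(e^{\sigma|\xi|}|\hat f(\xi)|\big)\,e^{(|y|-\sigma)|\xi|}$ and applying Cauchy--Schwarz with hypothesis $(2)$ and $|y| < \sigma$ bounds the integral locally uniformly in $y$. Holomorphicity of $F$ on $S_\sigma$ then follows from Morera's theorem (or differentiation under the integral sign), and setting $y=0$ recovers $f$ by Fourier inversion.

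For the uniform $L^2$ bound in $(2)\Rightarrow(1)$, I would observe that the Fourier transform in $x$ of $F(\cdot+iy)$ is $e^{-y\xi}\hat f(\xi)$, so Plancherel gives $\|F(\cdot+iy)\|_{L^2}^2 = \tfrac{1}{2\pi}\int |\hat f(\xi)|^2 e^{-2y\xi}\, d\xi$. Because $\sup_{|y|<\sigma} e^{-2y\xi} = e^{2\sigma|\xi|}$ for each fixed $\xi$, this is dominated by $\tfrac{1}{2\pi}\|e^{\sigma|\xi|}\hat f\|_{L^2}^2$ uniformly in $y$, which is the required bound. For the converse $(1)\Rightarrow(2)$, the key step is to identify the Fourier transform of the horizontal slice $f_y := F(\cdot+iy)$. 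I would fix $\xi \in \mathbb{R}$, apply Cauchy's theorem to the holomorphic function $z \mapsto F(z)\,e^{-i\xi z}$ on the rectangle with vertical sides at $x = \pm R$ and horizontal sides at heights $0$ and $y$, and let $R \to \infty$. Provided the vertical contributions vanish, this yields the contour-shift identity $\widehat{f_y}(\xi) = e^{-y\xi}\hat f(\xi)$. Plancherel then gives $\int |\hat f(\xi)|^2 e^{-2y\xi}\, d\xi = 2\pi\|f_y\|_{L^2}^2 \le 2\pi M^2$ with $M = \sup_{|y|<\sigma}\|F(\cdot+iy)\|_{L^2}$, valid for every $|y| < \sigma$. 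Letting $y \to -\sigma$ controls $\int_{\xi>0}|\hat f(\xi)|^2 e^{2\sigma\xi}\,d\xi$ and letting $y \to \sigma$ controls $\int_{\xi<0}|\hat f(\xi)|^2 e^{-2\sigma\xi}\,d\xi$; by Fatou's lemma each is at most $2\pi M^2$, and summing yields $\|e^{\sigma|\xi|}\hat f\|_{L^2} < \infty$, which is $(2)$.

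The main obstacle I anticipate is justifying the contour shift in $(1)\Rightarrow(2)$, namely the claim that the vertical edge integrals $\int_0^y F(\pm R + it)\,e^{-i\xi(\pm R + it)}\,dt$ vanish as $R\to\infty$. This fails for arbitrary $R$, but the uniform $L^2$ bound forces $\int_{\mathbb{R}} \big(\int_0^{|y|} |F(s+it)|^2\, dt\big)\,ds \le M^2|y| < \infty$ by Fubini, so the inner integral is an integrable function of $s$ and hence tends to $0$ along some sequence $R_n \to \infty$ (and another tending to $-\infty$). Passing to these sequences and estimating the vertical terms by Cauchy--Schwarz makes them disappear, which legitimizes the shift. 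I would carry out this subsequence argument carefully, as it is the one genuinely analytic input; the remaining steps are routine applications of Plancherel, Cauchy--Schwarz, and Fatou.
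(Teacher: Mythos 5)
The paper offers no proof of this proposition at all: it is quoted as a known result, with a pointer to chapter 4, section 7 of Katznelson's book, so there is no in-paper argument to compare yours against, and your proposal must stand on its own. It essentially does. The direction $(2)\Rightarrow(1)$ is complete and correct: the weighted Cauchy--Schwarz bound makes the inversion integral converge absolutely and locally uniformly on $S_\sigma$, Morera gives holomorphy, Fourier inversion (legitimate since hypothesis $(2)$ puts $\hat f$ in $L^1$) recovers $f$ on the real line, and Plancherel applied to the slice transforms $e^{-y\xi}\hat f(\xi)$ gives the uniform $L^2$ bound. In $(1)\Rightarrow(2)$ you also correctly isolate and resolve the main analytic difficulty, namely that the vertical edges of the rectangle cannot be controlled for arbitrary $R$: the Fubini argument producing sequences $R_n\to+\infty$ and $L_n\to-\infty$ along which $\int_0^{|y|}|F(\cdot+it)|^2\,dt\to 0$, followed by Cauchy--Schwarz on the edges, is exactly the right device.

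One step does need tightening, though the fix is routine rather than a new idea. You read off the identity $\widehat{f_y}(\xi)=e^{-y\xi}\hat f(\xi)$ as the limit of the rectangle identity, which implicitly treats the horizontal pieces $\int F(s)e^{-i\xi s}\,ds$ and $\int F(s+iy)e^{-i\xi(s+iy)}\,ds$ as converging pointwise in $\xi$ to $\hat f(\xi)$ and $e^{y\xi}\widehat{f_y}(\xi)$. For functions that are merely $L^2$, truncated Fourier integrals converge only in the $L^2_\xi$ sense (pointwise a.e. convergence of the full limit is Carleson's theorem, which you certainly do not want to invoke). The repair: Cauchy's theorem gives, for every fixed $\xi$ and every $n$,
\begin{equation*}
\int_{L_n}^{R_n}f(s)e^{-i\xi s}\,ds \;-\; e^{y\xi}\int_{L_n}^{R_n}f_y(s)e^{-i\xi s}\,ds \;=\; E_n(\xi),
\end{equation*}
where $E_n(\xi)$ collects the two vertical edge terms and tends to $0$ pointwise in $\xi$ by your Cauchy--Schwarz estimate. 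Since the two truncated integrals converge in $L^2_\xi$ to $\hat f$ and $\widehat{f_y}$ respectively, they converge a.e.\ along a further subsequence of $(L_n,R_n)$; passing to that subsequence gives $\hat f(\xi)=e^{y\xi}\widehat{f_y}(\xi)$ for a.e.\ $\xi$, which is all that the subsequent Plancherel and Fatou steps require. With that patch, your proof is correct, and it is in substance the standard argument the paper delegates to the literature.
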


A proof of this can be found in chapter 4, section 7 of \cite{K1976}.  The quantity $\sigma$ is known as the \emph{radius of analyticity}.  Thus, for functions in $G^{\sigma_1, \sigma_2}$, if we hold one variable fixed, the resulting function in the other variable will have a holomorphic extension satisfying the stated bounds.

In addition to the holomorphic extension property, Gevrey spaces satisfy the embeddings $G^{\sigma_1, \sigma_2} \hookrightarrow G^{\sigma'_1, \sigma_2'}$ for $\sigma_{i}' < \sigma_{i}$, which follow from the corresponding estimates
\begin{equation}\label{embed}
\| f \|_{G^{\sigma_1', \sigma_2'}} \lesssim \| f \|_{G^{\sigma_1, \sigma_2}}.
\end{equation}
With these facts in mind, we may state our main results for this paper.  The first result relates to the short-term persistence of analyticity of solutions.
\begin{thm} \label{the1.2}
Let  $\sigma_1 \geq 0$ and $\sigma_2 \geq 0$. Then for all initial data $f \in  G^{\sigma_1, \sigma_2}$, there exists $\delta = \delta(\parallel f\parallel_{G^{\sigma_1, \sigma_2}}) > 0$ and a unique solution $u$ of \eqref{p1} on the time interval $[0, \delta]$ such that $$u \in C\left([0, \delta]; G^{\sigma_1, \sigma_2} (\mathbb{R}^{2})\right).$$  Moreover the solution depends continuously on the data $f$.  In particular, the time of existence can be chosen to satisfy
\[
\delta = \frac{c_{0}}{(1 + \| f \|_{G^{\sigma_1, \sigma_2}})^{\gamma}},
\]
for some constants $c_{0}> 0$ and $\gamma > 1$.  Moreover, the solution $u$ satisfies
\[
\sup_{t \in [0,\delta]}\| u(t) \|_{G^{\sigma_1, \sigma_2}} \leq 2 \| f \|_{G^{\sigma_1, \sigma_2}}.
\]
\end{thm}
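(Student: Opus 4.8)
The plan is to recast \eqref{p1} as its Duhamel integral equation and to solve that equation by a contraction (Banach fixed-point) argument in a Gevrey-modified Bourgain space. Writing the nonlinearity as $u\partial_x u = \tfrac12\partial_x(u^2)$ and letting $W(t)$ denote the unitary Fourier multiplier with symbol $e^{it\phi(\xi,\eta)}$, $\phi(\xi,\eta)=\xi^5-\eta^2/\xi$, associated with the linear part of \eqref{p1}, a function $u$ solves \eqref{p1} on $[0,\delta]$ precisely when
$$ u(t) = W(t)f - \tfrac12\int_0^t W(t-t')\,\partial_x\!\left(u(t')^2\right)\,dt'. $$
The decisive structural fact is that $W(t)$ commutes with the Gevrey weight $\Lambda = e^{\sigma_1|D_x|+\sigma_2|D_y|}$ and that $|e^{it\phi}|=1$; since $\|g\|_{G^{\sigma_1,\sigma_2}} = \|\Lambda g\|_{L^2}$, the propagator is an isometry of $G^{\sigma_1,\sigma_2}$, so $\|W(t)f\|_{G^{\sigma_1,\sigma_2}} = \|f\|_{G^{\sigma_1,\sigma_2}}$ for every $t$.

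I would carry out the contraction in the space $X^{\sigma_1,\sigma_2,b}$ carrying the norm
$$\|u\|_{X^{\sigma_1,\sigma_2,b}} = \left(\int e^{2\sigma_1|\xi|}e^{2\sigma_2|\eta|}\langle\tau-\phi(\xi,\eta)\rangle^{2b}\,|\hat u(\xi,\eta,\tau)|^2\,d\xi\,d\eta\,d\tau\right)^{1/2},$$
with $b>\tfrac12$, together with its time-localized restriction $X^{\sigma_1,\sigma_2,b}_\delta$. For this one needs the three standard building blocks: the linear estimate $\|W(t)f\|_{X^{\sigma_1,\sigma_2,b}_\delta}\lesssim\|f\|_{G^{\sigma_1,\sigma_2}}$, the Duhamel estimate bounding the integral term in $X^{\sigma_1,\sigma_2,b}_\delta$ by the $X^{\sigma_1,\sigma_2,b-1}_\delta$ norm of $\partial_x(u^2)$, and the embedding $X^{\sigma_1,\sigma_2,b}_\delta\hookrightarrow C([0,\delta];G^{\sigma_1,\sigma_2})$ valid for $b>\tfrac12$. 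Because $\Lambda$ acts only on the spatial frequencies and commutes with every operator appearing in the linear flow, each of these transfers directly from the corresponding $L^2$-based estimate underlying the local theory of Saut--Tzvetkov \cite{ST2000} and Isaza--L\'opez--Mej\'ia \cite{ILM2006}.

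The heart of the matter, and the step I expect to be the main obstacle, is the bilinear estimate
$$\left\|\partial_x(uv)\right\|_{X^{\sigma_1,\sigma_2,b-1}_\delta}\lesssim \delta^{\theta}\,\|u\|_{X^{\sigma_1,\sigma_2,b}_\delta}\,\|v\|_{X^{\sigma_1,\sigma_2,b}_\delta}$$
for some $\theta\in(0,1)$. I would deduce it from the analogous Sobolev bilinear estimate of the $L^2$ theory by transferring the exponential weights onto the two factors: on the support of the convolution defining $\widehat{uv}$ one has $\xi=\xi_1+\xi_2$ and $\eta=\eta_1+\eta_2$, so the triangle inequality gives $e^{\sigma_1|\xi|}e^{\sigma_2|\eta|}\le e^{\sigma_1|\xi_1|}e^{\sigma_2|\eta_1|}\,e^{\sigma_1|\xi_2|}e^{\sigma_2|\eta_2|}$, whence the Gevrey weight of the product is dominated pointwise in frequency by the product of the weights of the factors. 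Replacing $\hat u,\hat v$ by $e^{\sigma_1|\xi|}e^{\sigma_2|\eta|}|\hat u|,\ e^{\sigma_1|\xi|}e^{\sigma_2|\eta|}|\hat v|$ (which leaves the modulation weight $\langle\tau-\phi\rangle$ untouched) reduces the Gevrey bilinear estimate to the known $L^2$ one; the only delicate point, the low-frequency singularity of $\partial_x^{-1}$ in $\phi$ near $\xi=0$, is inherited unchanged from those references and is unaffected by the smooth frequency weights. The factor $\delta^{\theta}$ is produced by the time-restriction lemma and is what ultimately controls the lifespan.

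With these estimates the map $\Phi(u)=W(t)f-\tfrac12\int_0^t W(t-t')\partial_x(u^2)\,dt'$ satisfies $\|\Phi(u)\|_{X^{\sigma_1,\sigma_2,b}_\delta}\le C\|f\|_{G^{\sigma_1,\sigma_2}}+C\delta^{\theta}\|u\|_{X^{\sigma_1,\sigma_2,b}_\delta}^2$, with an analogous estimate for differences. Running the fixed-point argument on the ball of radius $R=2C\|f\|_{G^{\sigma_1,\sigma_2}}$ forces the contraction condition $C\delta^{\theta}R\lesssim 1$, i.e. $\delta^{\theta}\lesssim(1+\|f\|_{G^{\sigma_1,\sigma_2}})^{-1}$, which yields the stated lifespan $\delta=c_0(1+\|f\|_{G^{\sigma_1,\sigma_2}})^{-\gamma}$ with $\gamma=1/\theta>1$. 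Uniqueness and continuous dependence on the data follow from the same difference estimate, and choosing $\delta$ slightly smaller so that the Duhamel contribution is at most $\|f\|_{G^{\sigma_1,\sigma_2}}$ gives, through the embedding into $C([0,\delta];G^{\sigma_1,\sigma_2})$, the bound $\sup_{t\in[0,\delta]}\|u(t)\|_{G^{\sigma_1,\sigma_2}}\le 2\|f\|_{G^{\sigma_1,\sigma_2}}$.
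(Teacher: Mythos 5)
Your proposal is correct and follows essentially the same route as the paper: the paper likewise runs a Picard/contraction argument in Gevrey-weighted Bourgain spaces (its $Y^{\sigma_1,\sigma_2,b}$ are your $X^{\sigma_1,\sigma_2,b}$), proves the key bilinear estimate exactly as you do --- transferring the exponential weights onto the two factors via the triangle inequality so as to invoke the Isaza--L\'opez--Mej\'ia $L^{2}$ bilinear estimate --- and extracts the $\delta^{\gamma}$ gain from the time-localized Duhamel estimate, arriving at the same lifespan $\delta \sim (1+\|f\|_{G^{\sigma_1,\sigma_2}})^{-1/\theta}$ and the bound $\sup_{t}\|u(t)\|_{G^{\sigma_1,\sigma_2}}\leq 2\|f\|_{G^{\sigma_1,\sigma_2}}$. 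The only minor divergence is uniqueness, which the paper establishes by a Gr\"onwall energy argument for the difference of two solutions rather than from the contraction's difference estimate.
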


Our second main result concerns the evolution of the radius of analyticity for the $x$-direction.
\begin{thm} \label{the1.3}
Let  $\sigma_1 > 0$ and $\sigma_2 \geq 0$, and assume $f \in  G^{\sigma_1, \sigma_2}$.  Then the solution $u$ given by Theorem \ref{the1.2} extends globally in time, and for any $T > 0$, we have
$$
u \in C\left([0, T], G^{\sigma(T), 0}(\mathbb{R}^{2}) \right) \quad \text{with} \quad \sigma(T) = \min\left\lbrace \sigma_1, C T^{-1}\right\rbrace,$$
where $C > 0$ is a constant which does not depend on $T$. 
\end{thm}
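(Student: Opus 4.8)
The plan is to combine the local theory of Theorem~\ref{the1.2} with an \emph{approximate conservation law} for the Gevrey norm, iterated over successive short time intervals. First I would reduce to the case $\sigma_2 = 0$: by the embedding \eqref{embed} the data satisfies $f \in G^{\sigma_1,0}$, and since the desired conclusion only records analyticity in the $x$-direction, there is no loss in forgetting the $y$-regularity. Write $A_\sigma = e^{\sigma |D_x|}$ for the Fourier multiplier with symbol $e^{\sigma |\xi|}$, so that $\|v\|_{G^{\sigma,0}} = \|A_\sigma v\|_{L^2}$. Note that Theorem~\ref{the1.2} applies with $\sigma_2 = 0$, which will be what allows the solution to be restarted from $G^{\sigma,0}$ data at each step of the iteration.

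The heart of the argument is to show that the quantity $E_\sigma(t) = \|u(t)\|_{G^{\sigma,0}}^2$ is almost conserved. Applying $A_\sigma$ to \eqref{p1}, pairing with $A_\sigma u$ in $L^2$, and using that the multipliers associated with $\partial_x^5$ and $\partial_x^{-1}\partial_y^2$ are skew-adjoint and commute with $A_\sigma$, the linear terms drop out and one is left with
\[
\frac{1}{2}\frac{d}{dt} E_\sigma(t) = - \int A_\sigma u \cdot A_\sigma(u \partial_x u)\, dx\, dy.
\]
The key observation is that were $A_\sigma$ multiplicative the right-hand side would vanish, since $\int v^2 \partial_x v = 0$; thus I would rewrite the integrand as a commutator, replacing $A_\sigma(u\partial_x u)$ by $A_\sigma u\, \partial_x A_\sigma u$ plus an error governed by the symbol difference $e^{\sigma|\xi|} - e^{\sigma|\xi_1|} e^{\sigma|\xi_2|}$ on the set $\xi = \xi_1 + \xi_2$. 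Since $|\xi_1| + |\xi_2| - |\xi| \le 2 \min(|\xi_1|, |\xi_2|)$, this difference is bounded by $C \sigma \min(|\xi_1|,|\xi_2|)\, e^{\sigma|\xi_1|} e^{\sigma|\xi_2|}$, which supplies the crucial factor of $\sigma$.

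Integrating in time over $[0,\delta]$ and estimating the resulting trilinear form in the Bourgain-type space adapted to the fifth-order KP dispersion — using the bilinear smoothing and Strichartz estimates that underlie the local theory — I expect a bound of the form
\[
\sup_{0 \le t \le \delta} E_\sigma(t) \le E_\sigma(0) + C \sigma \Big( \sup_{0 \le t \le \delta} \|u(t)\|_{G^{\sigma,0}} \Big)^3,
\]
with $C$ independent of $\sigma$ and $\delta$. The main obstacle is precisely this trilinear estimate: one must recover the commutator gain of $\sigma$ while keeping the remaining factors controlled by the same norms that appear in the fixed-point scheme of Theorem~\ref{the1.2}, so that the a priori bound $\sup_{[0,\delta]} \|u\|_{G^{\sigma,0}} \le 2 \|f\|_{G^{\sigma,0}}$ may be inserted on the right.

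Finally I would iterate. Fixing $\sigma \le \sigma_1$ and setting $D_0 = \|f\|_{G^{\sigma_1,0}}^2 \ge \|f\|_{G^{\sigma,0}}^2$, the almost conservation law produces a per-step increment of $E_\sigma$ of size $O(\sigma D_0^{3/2})$ on each interval of length $\delta \sim (1 + \sqrt{2 D_0})^{-\gamma}$, provided the norm has not more than doubled. A bootstrap induction then shows that $E_\sigma(n\delta) \le 2 D_0$ persists as long as the accumulated increment $n \cdot C \sigma D_0^{3/2}$ stays below $D_0$. Since covering $[0,T]$ requires $n \approx T/\delta$ steps and $\delta$ is bounded below uniformly once the norm is controlled, this condition reduces to $T \sigma \lesssim 1$, i.e. it holds once $\sigma \le C T^{-1}$. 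Choosing $\sigma(T) = \min\{\sigma_1, C T^{-1}\}$ thus yields a solution on all of $[0,T]$ with $\sup_{[0,T]} \|u(t)\|_{G^{\sigma(T),0}}$ bounded by $2 D_0^{1/2}$, and since $T > 0$ is arbitrary the solution extends globally, which completes the proof.
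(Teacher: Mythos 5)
Your proposal is correct and follows essentially the same route as the paper: reduction to $\sigma_2 = 0$, an almost conservation law for $\|u(t)\|_{G^{\sigma,0}}^2$ obtained from the commutator $A_\sigma(u\partial_x u) - A_\sigma u\,\partial_x A_\sigma u$, the factor $\sigma$ extracted from the exponential symbol difference (your $\min(|\xi_1|,|\xi_2|)$ weight is equivalent to the paper's $\langle \xi-\xi_1\rangle\langle\xi_1\rangle/\langle\xi\rangle$, absorbed by the bilinear estimate at $s_1=-1$), and a bootstrap iteration over intervals of uniform length $\delta$ yielding the condition $T\sigma \lesssim 1$ and hence $\sigma(T) = \min\{\sigma_1, CT^{-1}\}$. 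The only cosmetic difference is that the paper states the cubic error in terms of the Bourgain norm $\|u\|_{Y^{\sigma_1,0,b}}^3$ rather than $\sup_t \|u(t)\|_{G^{\sigma,0}}^3$, which amounts to the same thing once the local-theory bound is inserted.
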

\noindent The method used here for proving lower bounds on the radius of analyticity was introduced in \cite{ST2015} in the study of the 1D Dirac-Klein-Gordon equations. It was applied to the modified Kawahara equation \cite{PD2019} and the non-periodic KdV equation in \cite{SD2017}, to the dispersion-generalized periodic KdV equation in \cite{HKS2017}, the Ostrovsky equation \cite{BZGG2020a}, and to the quartic generalized KdV equation on the line in \cite{ST2017}.

The rest of the paper is organized as follows: in section \ref{prel}, we introduce the various tools which will be used in the proofs of our main theorems.  We then prove Theorems \ref{the1.2} and \ref{the1.3} in sections \ref{local} and \ref{global}, respectively.
 
\section{Preliminaries}\label{prel}

In this section, we introduce the preliminary results necessary for our proofs.  We begin by fixing the notation to be used.  We begin by noting that, in addition to the spatial Fourier transform  stated above, we will also need the full spacetime Fourier transform, which we denote by
\[
\hat{f}(\xi, \eta, \tau) = \int_{\mathbb{R}^{3}} f(x,y,t) e^{-i (x \xi + y \eta + t \tau)}\ dx dy dt.
\]
In both cases, we will denote the corresponding inverse transform of a function $f = f(\xi, \eta)$ or $f = f(\xi, \eta, \tau)$ by $\mathfrak{F}^{-1}(f)$.

To simplify the notation, we introduce some operators which will be introduced later.  We first introduce the operator $A^{\sigma_1, \sigma_2}$, which we define as
\begin{equation}\label{1.1.3}
    A^{\sigma_1, \sigma_2}f = \mathfrak{F}^{-1}\left( e^{\sigma_1 |\xi|} e^{\sigma_2 |\eta|} \hat{f} \right).
\end{equation}
With this, we may then define another useful operator
\begin{equation}\label{nu}
    N(f) = \partial_{x}\left[(A^{\sigma_1, \sigma_2}f)^2 - A^{\sigma_1, \sigma_2}(f^2)\right].
\end{equation}
For $x \in \mathbb{R}^{n}$, we denote $\langle x \rangle = (1 + |x|^{2})^{1/2}$.  Finally, we write $a \lesssim b$ if there exists a constant $C > 0$ such that $a \leq Cb$, and $a \sim b$ if $a \lesssim b \lesssim a$.  If the constant $C$ depends on a quantity $q$, we denote this by $a \lesssim_{q} b$.

Since our proofs will rely heavily on the theory developed by Isaza, L\'{o}pez and Mej\'{i}a, let us state the function spaces they used explicitly, so that we can state their useful properties which we will exploit in our modifications of their spaces.  The main function spaces they used are the so-called Bourgain spaces, adapted to the KP II equation, whose norm is given by
\[
\| u \|_{X^{s_1, s_2, b, \varepsilon}} =
\left(\displaystyle \int_{\mathbb{R}^{3}} \lambda^{2}(s_1, s_2, b, \varepsilon) |\hat{u}(\xi, \eta, \tau)|^{2} d\xi d\eta d\tau \displaystyle\right) ^{\frac{1}{2}},
\]
where 
\[
\lambda(s_1, s_2, b, \varepsilon) = \langle \xi \rangle^{s_1} \langle \eta \rangle^{s_2} \langle \tau - m(\xi, \eta) \rangle^{b} \left\langle \frac{\tau - m(\xi,\eta)}{1 + |\xi|^{5}}\right\rangle^{\varepsilon}.
\]
with $m(\xi, \eta)=\xi^{5}-\frac{\eta^{2}}{\xi}$.  As is well-known (see, for example, Corollary 2.10 of \cite{T2006}), these spaces satisfy the embedding $X^{s_1, s_2, b, \varepsilon} \hookrightarrow C \left(\mathbb{R}; H^{s_1, s_2} (\mathbb{R}^{2})\right)$.  Thus, solutions constructed in $X^{s_1, s_2, b, \varepsilon}$ belong to the natural solution space.

When considering local solutions, it is useful to consider localized versions of these spaces.  For a time interval $I$ and a Banach space $Z$, we define the localized space $Z(I)$ by the norm
\[
\| u \|_{Z(I)} = \inf \{ \| v \|_{Z}:\ v = u \textrm{ on } I \}.
\]
It is easy to see that estimates which hold for the original spaces also hold for the localized spaces.  For simplicity, we will omit the interval from the notation when there is no chance of confusion.

The final preliminary fact we must state is the following bilinear estimate, which is Lemma 1.1 of \cite{ILM2006}:
\begin{lem}\label{bilinear}
Let $s_1 > - 5/4$, $s_2 \geq 0$, $b > 1/2$, and define $s = \max \{0, -s_1\}$.  If $\varepsilon$ and $\beta$ satisfy the inequalities
\[
0 \leq \varepsilon \leq \min \left\{ \frac{2}{5}\left( \frac{5}{4} - s \right), \frac{3}{20} \right\}
\]
and
\[
\max \left\{ \frac{9}{20}, \frac{1}{2} - \frac{1}{2}\left( \frac{5}{4} - s \right) + \varepsilon \right\} \leq \beta < \frac{1}{2},
\]
then
\[
\| \partial_{x} (uv) \|_{X^{s_1, s_2, -\beta, \varepsilon}} \lesssim \| u \|_{X^{s_1, s_2, b, \varepsilon}} \| v \|_{X^{s_1, s_2, b, \varepsilon}}.
\]
\end{lem}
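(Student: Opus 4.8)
The plan is to prove this by the standard Bourgain-space ($X^{s,b}$) method: dualize, reduce to a weighted trilinear $L^2$ integral, and then exploit the favorable resonance structure of the fifth-order dispersion relation $m(\xi,\eta) = \xi^5 - \eta^2/\xi$. First I would write the $X^{s_1,s_2,b,\varepsilon}$ norm as a weighted $L^2$ norm of $\hat{u}$ against the multiplier $\lambda(s_1,s_2,b,\varepsilon)$. Setting $f_j = \lambda(s_1,s_2,b,\varepsilon)\widehat{u_j}$ and pairing the output against a dual function $g$ normalized in $L^2$, the claim becomes the trilinear bound
\[
\left| \int_{\ast} M(\xi_1,\eta_1,\tau_1,\xi_2,\eta_2,\tau_2)\, f_1\, f_2\, g \right| \lesssim \|f_1\|_{L^2}\|f_2\|_{L^2}\|g\|_{L^2},
\]
where $\ast$ denotes integration over the convolution set $(\xi,\eta,\tau)=(\xi_1,\eta_1,\tau_1)+(\xi_2,\eta_2,\tau_2)$ and $M$ collects all weights together with the factor $|\xi|$ coming from $\partial_x$. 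Introducing the modulation variables $\sigma = \tau - m(\xi,\eta)$ and $\sigma_j = \tau_j - m(\xi_j,\eta_j)$, the numerator of $M$ carries $|\xi|\langle\xi\rangle^{s_1}\langle\eta\rangle^{s_2}\langle\sigma\rangle^{-\beta}$ and the denominator $\prod_{j}\langle\xi_j\rangle^{s_1}\langle\eta_j\rangle^{s_2}\langle\sigma_j\rangle^{b}$, with the $\varepsilon$-weights as lower-order corrections.

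The crucial algebraic input is the \emph{resonance identity}
\[
\sigma_1 + \sigma_2 - \sigma = m(\xi,\eta) - m(\xi_1,\eta_1) - m(\xi_2,\eta_2) = 5\,\xi_1\xi_2\xi\,(\xi_1^2+\xi_1\xi_2+\xi_2^2) + \frac{\xi_1\xi_2}{\xi}\left(\frac{\eta_1}{\xi_1}-\frac{\eta_2}{\xi_2}\right)^2,
\]
where $\xi = \xi_1 + \xi_2$, obtained from $(a+b)^5 - a^5 - b^5 = 5ab(a+b)(a^2+ab+b^2)$ and the elementary KP identity $\eta_1^2/\xi_1 + \eta_2^2/\xi_2 - \eta^2/\xi = \frac{\xi_1\xi_2}{\xi}(\eta_1/\xi_1-\eta_2/\xi_2)^2$. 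Both terms carry the sign of $\xi_1\xi_2\xi$, so there is no cancellation and $|\sigma_1+\sigma_2-\sigma| \gtrsim |\xi_1\xi_2\xi|(\xi_1^2+\xi_1\xi_2+\xi_2^2)$, whence $\max\{\langle\sigma\rangle,\langle\sigma_1\rangle,\langle\sigma_2\rangle\}$ is bounded below by the same quantity. This strong high-frequency smoothing is exactly what compensates the derivative $|\xi|$ and the negative index $s_1$: in the factor $\langle\xi\rangle^{s_1}\langle\xi_j\rangle^{-s_1} = \langle\xi\rangle^{s_1}\langle\xi_j\rangle^{s}$ the dangerous growth occurs only when some $\xi_j$ is large, and there the resonance gain absorbs it. I would then split into three cases according to which modulation is largest, trade the corresponding weight $\langle\sigma\rangle^{-\beta}$ or $\langle\sigma_j\rangle^{-b}$ against the resonance lower bound, and reduce each resulting integral via Cauchy--Schwarz together with the $L^4$/bilinear Strichartz-type estimates for the propagator $e^{t(\partial_x^5 - \partial_x^{-1}\partial_y^2)}$.

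The anisotropic corrector $\langle\sigma/(1+|\xi|^5)\rangle^{\varepsilon}$ is genuinely lower order: on each dyadic block it is essentially constant and can be absorbed into the principal modulation weights, at the cost of sharpening the admissible range of $(\varepsilon,\beta)$ — which is precisely where the stated constraints enter, ensuring that every exponent closes. The main obstacle I expect is the region of small $|\xi|$ with nearly resonant interactions, i.e.\ $\langle\sigma\rangle,\langle\sigma_1\rangle,\langle\sigma_2\rangle$ all comparable and small: there the dispersive gain from the identity degenerates, and one must instead control the singular $\partial_x^{-1}\partial_y^2$ contribution to $m$ using the transverse term $\frac{\xi_1\xi_2}{\xi}(\eta_1/\xi_1-\eta_2/\xi_2)^2$ together with the $\langle\eta\rangle^{s_2}$ weights. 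Since this case decomposition is carried out in full as Lemma~1.1 of \cite{ILM2006}, I would follow their argument, importing their linear and bilinear smoothing estimates for the fifth-order KP~II group.
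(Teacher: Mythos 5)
The paper does not actually prove this lemma---it is imported verbatim as Lemma~1.1 of \cite{ILM2006}---and your proposal ultimately does the same thing, deferring the full case analysis to that reference, so the two treatments coincide. Your sketch of what lies inside the cited proof is accurate: the dualized trilinear reduction, and the resonance identity in which both the term $5\xi\xi_1\xi_2(\xi_1^2+\xi_1\xi_2+\xi_2^2)$ and the transverse term $\frac{\xi_1\xi_2}{\xi}\bigl(\frac{\eta_1}{\xi_1}-\frac{\eta_2}{\xi_2}\bigr)^2$ carry the sign of $\xi\xi_1\xi_2$, are exactly the mechanism used there, and your algebra checks out.
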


\section{Proof of Theorem \ref{the1.2}}\label{local}

We may now begin the proof of Theorem \ref{the1.2}.  Since this is a modification of the proof of Isaza, L\'{o}pez, and Mej\'{i}a in \cite{ILM2006}, we will merely outline the essential steps.  To begin, consider the linear problem
\begin{align*}
& \partial_{t} u - \partial_{x}^{5} u + \partial^{-1}_{x} \partial^{2}_{y} u = F, \\
& u(0) = f.
\end{align*}
By Duhamel's principle the solution can be written as
\begin{equation}\label{12.11}
u(t) = S(t)f - \dfrac{1}{2} \int_{0}^{t} S(t-t') F(t') dt',
\end{equation}
where $$\widetilde{S(t)f}(\xi,\eta) = e^{i t m(\xi,\eta)} \tilde{f}(\xi,\eta).$$
Let $\psi \in C^{\infty}_{0}(\mathbb{R})$ be supported in the interval $(-2,2)$ such that $0 \leq \psi(t) \leq 1$ and $\psi = 1$ on $[-1,1]$, and let $\phi_{\delta} \in C^{\infty}_{0}(\mathbb{R})$ be supported on $(-2 T^{-1/2}, 2 T^{-1/2})$ such that $0 \leq \phi_{\delta}(t) \leq 1$ and $\phi_{\delta} = 1$ on $[-T^{-1/2},T^{-1/2}]$.  We then observe that we may decompose the integral operator on the right-hand side of  equation \eqref{12.11} in the form
\begin{equation}\label{112}
-\dfrac{1}{2}\int_{0}^{t}S(t-t')f(t')dt'=I_{\phi_{\delta}}(f)+II_{\phi_{\delta}}(f)+III_{\phi _{\delta}}(f),
\end{equation}
where 
$$I_{\phi_{\delta}}(f) = C \int e^{i(x\xi+y\eta)} e^{itm(\xi,\eta)} \dfrac{e^{itp(\xi, \eta, \tau)} - 1}{i p(\xi, \eta, \tau)} \phi_{\delta}(p(\xi, \eta, \tau)) \hat{f}(\xi,\eta,\tau)\ d\xi d\eta d\tau,  $$

$$II_{\phi_{\delta}}(f) = C \int e^{i(x\xi+y\eta)} \dfrac{e^{it\tau}}{ip(\xi, \eta, \tau)}(1-\phi_{\delta}(p(\xi, \eta, \tau))) \hat{f}(\xi,\eta,\tau)\ d\xi d\eta d\tau,  $$

$$III_{\phi_{\delta}}(f) = -C \int e^{i(x\xi+y\eta)} e^{itm(\xi,\eta)} \dfrac{(1-\phi_{\delta}(p(\xi, \eta, \tau))}{i p(\xi, \eta, \tau)} \hat{f}(\xi,\eta,\tau)\ d\xi d\eta d\tau.  $$
Here, $p(\xi, \eta, \tau) = \tau - m(\xi,\eta)$.  We then define a modification $G_{\delta}(f)$ of the integral operator in  (\ref{112}),
$$G_{\delta}(f)= \psi(t/\delta)I_{\phi_{\delta}}(f)+II_{\phi_{\delta}}(f)+\psi(t/\delta)III_{\pi_{\delta}}(f). $$
A simple computation will show that if $0 < \delta < 1$ and $t \in [-\delta, \delta]$, then
\[
G_{\delta}(f)(t) = -\dfrac{1}{2}\int_{0}^{t}S(t-t')f(t')dt'.
\]

Now define a sequence $\{ u_{n} \}_{n = 0}^{\infty}$ of functions which are solutions to the equations
\begin{align*}
    & \partial_{t} u_{0} - \partial_{x}^{5} u_{0} + \partial_{x}^{-1} \partial_{y}^{2} u_{0} = 0 & \qquad & \partial_{t} u_{n} - \partial_{x}^{5} u_{n} + \partial_{x}^{-1} \partial_{y}^{2} u_{n} = - u_{n-1}\partial_{x} u_{n-1} \\
    & u_{0}(x, y, 0) = f(x, y) & & u_{n}(x, y, 0) = f(x, y)
\end{align*}
By the discussion above, for $t \in (-\delta, \delta)$, we have the identity $u_{n}(x, y, t) = \Phi(u_{n-1}(x, y, t))$, where $$\Phi(u) = \psi(t) S(t) f + G_{\delta}\left( \partial_{x}(u^2) \right).$$  To this operator, we apply the following estimate, which follows from equations (1.2) and (1.4) of \cite{ILM2006} and the commutativity of Fourier multipliers:
\begin{lem}\label{lem2.2}
Let $\sigma_1 \geq 0$, $\sigma_2 \geq 0$, $\frac{1}{2} < b < 1$, $\beta \in (0, 1-b)$, and $0 < \delta < 1$.  Then
\[
\| \Phi(u) \|_{Y^{\sigma_1, \sigma_2, b}} \leq \| f \|_{G^{\sigma_1, \sigma_2}} + C \delta^{\gamma} \| \partial_{x} (u^2) \|_{Y^{\sigma_1, \sigma_2, -\beta}}
\]
for some $0 < \gamma < 1$.
\end{lem}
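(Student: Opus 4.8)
The plan is to reduce this Gevrey estimate to the two linear estimates of \cite{ILM2006} established for the $L^2$-based Bourgain spaces, using the fact that the analytic weight is itself a Fourier multiplier. Recall that $A^{\sigma_1,\sigma_2}$, defined in \eqref{1.1.3}, acts as multiplication by $e^{\sigma_1|\xi|}e^{\sigma_2|\eta|}$ on the spatial frequency side, and that by its definition the $Y^{\sigma_1,\sigma_2,b}$ norm is the $X^{0,0,b,\varepsilon}$ norm of $A^{\sigma_1,\sigma_2}u$, i.e.\ $\|u\|_{Y^{\sigma_1,\sigma_2,b}} = \|A^{\sigma_1,\sigma_2}u\|_{X^{0,0,b,\varepsilon}}$. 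The structural observation on which everything rests is that every operator building up $\Phi$ is diagonal in the spatial frequencies $(\xi,\eta)$: the propagator $S(t)$ has symbol $e^{itm(\xi,\eta)}$, the derivative $\partial_x$ has symbol $i\xi$, and the Duhamel operator $G_\delta$ is assembled from the spacetime multipliers $\phi_\delta(p)$, $(ip)^{-1}$, $e^{itm}$ together with multiplication by the time cutoffs $\psi(t/\delta)$ and $\psi(t)$. Since $A^{\sigma_1,\sigma_2}$ has a symbol depending only on $(\xi,\eta)$, it commutes with each of these, the time-domain multiplications commuting with it because they act on an independent variable.

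First I would apply $A^{\sigma_1,\sigma_2}$ to $\Phi(u)$ and commute it through, giving
\[
A^{\sigma_1,\sigma_2}\Phi(u) = \psi(t)\,S(t)\,A^{\sigma_1,\sigma_2}f + G_\delta\!\left(A^{\sigma_1,\sigma_2}\partial_x(u^2)\right).
\]
Taking $X^{0,0,b,\varepsilon}$ norms and using the triangle inequality then reduces the claim to
\[
\|\Phi(u)\|_{Y^{\sigma_1,\sigma_2,b}} \leq \|\psi(t)S(t)A^{\sigma_1,\sigma_2}f\|_{X^{0,0,b,\varepsilon}} + \|G_\delta(A^{\sigma_1,\sigma_2}\partial_x(u^2))\|_{X^{0,0,b,\varepsilon}}.
\]
To the first term I would apply the homogeneous linear estimate (equation (1.2) of \cite{ILM2006}), bounding it by $\|A^{\sigma_1,\sigma_2}f\|_{H^{0,0}} = \|A^{\sigma_1,\sigma_2}f\|_{L^2} = \|f\|_{G^{\sigma_1,\sigma_2}}$. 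To the second term I would apply the inhomogeneous (Duhamel) estimate (equation (1.4) of \cite{ILM2006}), which under the hypotheses $\tfrac12 < b < 1$ and $\beta \in (0, 1-b)$ yields the bound $C\delta^\gamma\|A^{\sigma_1,\sigma_2}\partial_x(u^2)\|_{X^{0,0,-\beta,\varepsilon}}$ for some $\gamma \in (0,1)$; by the definition of the $Y$ norm this is exactly $C\delta^\gamma\|\partial_x(u^2)\|_{Y^{\sigma_1,\sigma_2,-\beta}}$. Summing the two bounds produces the stated inequality, with the exponent $\gamma$ inherited verbatim from \cite{ILM2006}.

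There is no substantial analytic obstacle here, since the Gevrey weight never interacts with the dispersion: the whole content is the commutation identity above, which turns the estimate into the $L^2$ estimate of \cite{ILM2006} with $f$ replaced by $A^{\sigma_1,\sigma_2}f$. The only point demanding a little care is to verify rigorously that $A^{\sigma_1,\sigma_2}$ passes through the time cutoffs and the Duhamel kernel --- that is, that for each fixed spatial frequency $(\xi,\eta)$ the remaining operations in $t$ and $\tau$ carry no $(\xi,\eta)$-dependence of the weight --- which is immediate because multiplication by $e^{\sigma_1|\xi|}e^{\sigma_2|\eta|}$ merely rescales each spatial-frequency slice.
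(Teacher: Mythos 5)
Your proof is correct and is essentially identical in approach to the paper's: the paper's entire proof of Lemma \ref{lem2.2} is the remark that it ``follows from equations (1.2) and (1.4) of \cite{ILM2006} and the commutativity of Fourier multipliers,'' which is exactly your argument --- commute the spatial multiplier $A^{\sigma_1,\sigma_2}$ through $\psi(t)S(t)$, $G_\delta$ and $\partial_x$, then apply the homogeneous and Duhamel estimates of \cite{ILM2006} to $A^{\sigma_1,\sigma_2}f$ and $A^{\sigma_1,\sigma_2}\partial_x(u^2)$. You have merely written out in detail the commutation step that the paper leaves implicit.
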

\noindent To this result, we apply the following lemma, which is a corollary of Lemma \ref{bilinear}:
\begin{lem}\label{goodest}
For $\sigma_1 \geq 0$, $\sigma_2 \geq 0$, and $b > 1/2$, we have
\[
\| \partial_{x}(uv) \|_{Y^{\sigma_1, \sigma_2, 9/20}} \lesssim \| u \|_{Y^{\sigma_1, \sigma_2, b}} \| v \|_{Y^{\sigma_1, \sigma_2, b}}.
\]
\end{lem}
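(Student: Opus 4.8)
The plan is to reduce this exponentially weighted bilinear estimate to the unweighted ($s_1 = s_2 = 0$) case of Lemma \ref{bilinear} by transferring the Gevrey weights onto the two factors. First I would record the structural fact that the multiplier $e^{\sigma_1|\xi|}e^{\sigma_2|\eta|}$ defining $A^{\sigma_1,\sigma_2}$ commutes with $\partial_x$ and with the Fourier multipliers $\langle \tau - m(\xi,\eta)\rangle^{b}$ and $\langle (\tau-m(\xi,\eta))/(1+|\xi|^5)\rangle^{\varepsilon}$ appearing in the $X$-weight. Consequently the $Y$-norm is simply the $X$-norm (with $s_1=s_2=0$) of $A^{\sigma_1,\sigma_2}$ applied to the function, for the fixed admissible $\varepsilon$ built into the definition of the $Y$-spaces; in particular $\|\partial_x(uv)\|_{Y^{\sigma_1,\sigma_2,9/20}} = \|\partial_x A^{\sigma_1,\sigma_2}(uv)\|_{X^{0,0,9/20,\varepsilon}}$, while the two right-hand factors become $\|A^{\sigma_1,\sigma_2}u\|_{X^{0,0,b,\varepsilon}}$ and $\|A^{\sigma_1,\sigma_2}v\|_{X^{0,0,b,\varepsilon}}$.

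The heart of the matter is the subadditivity of the exponential weights. Writing the spacetime Fourier transform of the product as the convolution
\[
\mathfrak{F}(uv)(\xi,\eta,\tau) = \int \hat{u}(\xi_1,\eta_1,\tau_1)\,\hat{v}(\xi-\xi_1,\eta-\eta_1,\tau-\tau_1)\,d\xi_1 d\eta_1 d\tau_1,
\]
and invoking $|\xi| \leq |\xi_1| + |\xi-\xi_1|$ together with $|\eta| \leq |\eta_1| + |\eta-\eta_1|$, I obtain the pointwise domination
\[
e^{\sigma_1|\xi|}e^{\sigma_2|\eta|}\,|\mathfrak{F}(uv)| \leq \int e^{\sigma_1|\xi_1|}e^{\sigma_2|\eta_1|}|\hat{u}(\xi_1,\eta_1,\tau_1)|\; e^{\sigma_1|\xi-\xi_1|}e^{\sigma_2|\eta-\eta_1|}|\hat{v}(\xi-\xi_1,\eta-\eta_1,\tau-\tau_1)|\,d\xi_1 d\eta_1 d\tau_1.
\]
Introducing $U = \mathfrak{F}^{-1}(e^{\sigma_1|\xi|}e^{\sigma_2|\eta|}|\hat{u}|)$ and $V = \mathfrak{F}^{-1}(e^{\sigma_1|\xi|}e^{\sigma_2|\eta|}|\hat{v}|)$, whose Fourier transforms are nonnegative, the right-hand side is precisely $\mathfrak{F}(UV)$, so that $|\mathfrak{F}(A^{\sigma_1,\sigma_2}(uv))| \leq \mathfrak{F}(UV)$ pointwise.

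To conclude, I would use that the $X^{0,0,\cdot,\varepsilon}$ norm is an $L^2$ norm against a nonnegative weight and that the $\partial_x$ multiplier contributes the same nonnegative factor $|\xi|$ on both sides, so the pointwise inequality is preserved by the norm, giving $\|\partial_x A^{\sigma_1,\sigma_2}(uv)\|_{X^{0,0,9/20,\varepsilon}} \leq \|\partial_x(UV)\|_{X^{0,0,9/20,\varepsilon}}$. Then I apply Lemma \ref{bilinear} with $s_1=s_2=0$ and $\beta = 9/20$: here $s = \max\{0,-s_1\} = 0$, every $\varepsilon \in (0,3/20]$ is admissible, and since $-1/8 + \varepsilon < 9/20$ the lower bound $\max\{9/20,\,1/2 - \tfrac12(5/4 - s) + \varepsilon\}$ reduces to $9/20$, so $\beta = 9/20$ is the smallest (least lossy) admissible modulation exponent. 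This bounds $\|\partial_x(UV)\|$ by $\|U\|_{X^{0,0,b,\varepsilon}}\|V\|_{X^{0,0,b,\varepsilon}}$, and because $|\mathfrak{F}U| = |\mathfrak{F}(A^{\sigma_1,\sigma_2}u)|$ while the $X$-norm sees only the modulus, these equal $\|u\|_{Y^{\sigma_1,\sigma_2,b}}$ and $\|v\|_{Y^{\sigma_1,\sigma_2,b}}$, closing the estimate. The only point that needs care is the non-interaction between the exponential weights and the modulation weight $\langle\tau - m\rangle$: the transfer of the Gevrey weight onto the two factors must leave the $\langle\tau-m\rangle^{b}$ and $\varepsilon$ structure untouched, which is exactly what the commutativity of the multipliers and the convolution-with-modulus argument provide. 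Beyond Lemma \ref{bilinear} itself there is no further hard analysis; the subadditivity trick is the entire content.
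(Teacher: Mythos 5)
Your proposal is correct and follows essentially the same route as the paper: both reduce the weighted estimate to the $s_1=s_2=0$ case of Lemma \ref{bilinear} via the subadditivity $|\xi|\leq|\xi-\xi_1|+|\xi_1|$, $|\eta|\leq|\eta-\eta_1|+|\eta_1|$ of the exponential weights, then identify the resulting $X^{0,0,\cdot,\cdot}$ norms of $A^{\sigma_1,\sigma_2}u$, $A^{\sigma_1,\sigma_2}v$ with the $Y$ norms. Your introduction of the majorants $U,V$ with nonnegative Fourier transforms is in fact a slightly more careful rendering of the paper's display (which writes the convolution of moduli as $|\widehat{A^{\sigma_1,\sigma_2}u\,A^{\sigma_1,\sigma_2}v}|$ without the inner moduli), but it is the same argument, not a different one.
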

\begin{proof}
From the triangle inequality, it is easy to see that
\begin{align*}
  & e^{2 (\sigma_1 |\xi| + \sigma_2 |\eta|)} \left| \widehat{uv}(\xi,\eta, \tau) \right|^{2} \\
  & \quad = e^{2 (\sigma_1 |\xi| + \sigma_2 |\eta|)} \left| \int \hat{u}(\xi - \xi_1, \eta - \eta_1, \tau - \tau_1) \hat{v}(\xi_1, \eta_1, \tau_1)\ d\xi d\eta d\tau \right|^{2} \\
  & \quad \leq \left| \int e^{\sigma_1 |\xi - \xi_1| + \sigma_2 |\eta - \eta_1|}\hat{u}(\xi - \xi_1, \eta - \eta_1, \tau - \tau_1) \right. \times \\
  & \qquad \qquad \left. \times e^{\sigma_1 |\xi_1| + \sigma_2 |\eta_1|} \hat{v}(\xi_1, \eta_1, \tau_1)\ d\xi d\eta d\tau \right|^{2} \\
  & \quad = \left| \widehat{A^{\sigma_1, \sigma_2}u A^{\sigma_1, \sigma_2}v} \right|^{2}.
\end{align*}
It follows that
\begin{align*}
\| \partial_{x}(uv) \|_{Y^{\sigma_1, \sigma_2, 9/20}} & \leq \| \partial_{x}(A^{\sigma_1, \sigma_2}u A^{\sigma_1, \sigma_2}v) \|_{X^{0,0,\frac{9}{20},0}} \\
& \lesssim \| A^{\sigma_1, \sigma_2}u \|_{X^{0, 0, b, 0}} \| A^{\sigma_1, \sigma_2}v \|_{X^{0, 0, b, 0}} \\
& = \| u \|_{Y^{\sigma_1, \sigma_2, b}} \| v \|_{Y^{\sigma_1, \sigma_2, b}}
\end{align*}
by Lemma \ref{bilinear}.
\end{proof}
With these results, it is a simple matter to show that
\[
\| u_{n} \|_{Y^{\sigma_1, \sigma_2, b}} \leq \| f \|_{G^{\sigma_1, \sigma_2}} + C \delta^{\gamma} \| u_{n-1} \|_{Y^{\sigma_1, \sigma_2, b}}^{2}.
\]
Using a simple proof by induction, one may show that
\[
\| u_{n} \|_{Y^{\sigma_1, \sigma_2, b}} \leq 2 \| f \|_{G^{\sigma_1, \sigma_2}}
\]
for all $n \in \mathbb{N} \cup \{ 0 \}$, if we choose $\delta$ such that
\begin{equation}\label{prop1}
\delta < \frac{1}{(C \| f \|_{G^{\sigma_1, \sigma_2}})^{1/\gamma}}.
\end{equation}
The final step is to show that the sequence converges.  Applying Lemmas \ref{lem2.2} and \ref{goodest} once again, a similar computation will show that
\begin{align*}
\| u_{n} - u_{n-1} \|_{Y^{\sigma_1, \sigma_2, b}} & = \| \Phi(u_{n}) - \Phi(u_{n-1}) \|_{Y^{\sigma_1, \sigma_2, b}} \\
& \leq C \delta^{\gamma} M_{n-1} \| u_{n-1} - u_{n-2} \|_{Y^{\sigma_1, \sigma_2, b}} \\
& \leq 4 C \delta^{\gamma} \| f \|_{G^{\sigma_1, \sigma_2}} \| u_{n-1} - u_{n-2} \|_{Y^{\sigma_1, \sigma_2, b}}.
\end{align*}
where
\[
M_{n-1} = \| u_{n-1} \|_{Y^{\sigma_1, \sigma_2, b}} + \| u_{n-2} \|_{Y^{\sigma_1, \sigma_2, b}}.
\]
Thus, the sequence will converge if $\delta$ satisfies 
\begin{equation}\label{prop2}
\delta < \frac{1}{(4 C \| f \|_{G^{\sigma_1, \sigma_2}})^{1/\gamma}}.
\end{equation}
sufficiently small.  Thus, the sequence converges to a solution $u \in Y^{\sigma_1, \sigma_2, b} \subset C([0,\delta]; G^{\sigma_1, \sigma_2})$.  

To show uniqueness, suppose $u$ and $v$ are solutions to \eqref{p1}, and let $w = u - v$.  Then $w$ satisfies the equation
\[
\partial_{t}w - \partial_{x}^{5}w + \partial_{x}^{-1} \partial_{y}^{2} w = v \partial_{x}v - u \partial_{x}u,
\]
with $w(x, y, 0) = 0$.  Multplying by $w$ and integrating in $x$ and $y$ yields
\[
\frac{1}{2} \frac{d}{dt} \| w(t) \|_{L^{2}_{x,y}} \leq \left( \| u_{x}(t) \|_{L^{\infty}_{x,y}} + \| v_{x}(t) \|_{L^{\infty}_{x,y}} \right) \| w(t) \|_{L^{2}_{x,y}}.
\]
By the embedding
\[
\| u_{x}(t) \|_{L^{\infty}_{x,y}} \lesssim \| u(t) \|_{G^{\sigma_1, \sigma_2}} \leq 2 \| f \|_{G^{\sigma_1, \sigma_2}},
\]
which also holds for $v$, it will follow from Gr\"{o}nwall's Inequality that $w = 0$.  This completes the proof of Theorem \ref{the1.2}.

As a concluding remark, we observe that for later convenience, we may choose the time of existence to be
\[
\delta = \frac{C}{(1 + \| f \|_{G^{\sigma_1, \sigma_2}})^{1/\gamma}}.
\]
For appropriate choice of $C$, this will satisfy inequalities \eqref{prop1} and \eqref{prop2}.

\section{Proof of Theorem \ref{the1.3}}\label{global}

In this section, we begin the proof of Theorem \ref{the1.3}.  The first step is to obtain estimates on the growth of the norm of the solutions.  For this, we will need the following approximate conservation law:
\begin{prop}\label{th03}
Let $\sigma_1 \geq 0$.  Then there is a $b \in (1/2, 1) $ and a $C > 0$, such that $u \in Y^{\sigma_1, 0, b}(I)$ is a solution to the Cauchy problem (\ref{p1}) on the time interval $[0, \delta]$, we have the estimate
\begin{equation}\label{3.3.1}
\sup_{ t\in[0, T]} \| u(t) \|^{2}_{G^{\sigma_1, 0}} \leq \| f \|^{2}_{G^{\sigma, 0}} + C \sigma \| u \|^{3}_{Y^{\sigma_1, 0, b}(I)}.
\end{equation}
\end{prop}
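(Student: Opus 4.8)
The plan is to run the modified-energy argument of \cite{ST2015}: apply the Gevrey multiplier $A^{\sigma_1,0}$ to the equation, perform an $L^2$ energy estimate on the result, and show that the only term which survives is a cubic error carrying a factor of $\sigma_1$. Throughout write $\sigma = \sigma_1$ and $v = A^{\sigma_1,0}u$, so that by \eqref{1.1.3} and Plancherel $\|v(t)\|_{L^2_{x,y}} = \|u(t)\|_{G^{\sigma_1,0}}$. Since $A^{\sigma_1,0}$ is a Fourier multiplier it commutes with the linear part of \eqref{p1}; applying it to the equation and using $u\partial_x u = \tfrac12\partial_x(u^2)$ together with $v\,\partial_x v = \tfrac12\partial_x\big((A^{\sigma_1,0}u)^2\big)$, one checks that $v$ solves
\[
\partial_t v - \partial_x^5 v + \partial_x^{-1}\partial_y^2 v + v\,\partial_x v = \tfrac12\,N(u),
\]
where $N(u)$ is exactly the operator \eqref{nu}, the defect measuring the failure of $A^{\sigma_1,0}$ to commute with the nonlinearity.

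Next I would multiply this equation by $v$ and integrate over $\mathbb{R}^2$. The dispersive terms $\int v\,\partial_x^5 v$ and $\int v\,\partial_x^{-1}\partial_y^2 v$ vanish, since their Fourier symbols $i\xi^5$ and $i\eta^2/\xi$ are purely imaginary and odd under $(\xi,\eta)\mapsto(-\xi,-\eta)$ (the operators are skew-adjoint on real-valued functions), while the genuine nonlinearity contributes $\int v^2\,\partial_x v = \tfrac13\int\partial_x(v^3) = 0$. This leaves the energy identity $\frac{d}{dt}\|v(t)\|_{L^2}^2 = \int v\,N(u)\,dx\,dy$, so integrating in time and taking the supremum yields
\[
\sup_{t\in[0,T]}\|u(t)\|_{G^{\sigma_1,0}}^2 \le \|f\|_{G^{\sigma_1,0}}^2 + \sup_{t\in[0,T]}\left|\int_0^t\!\!\int_{\mathbb{R}^2} v\,N(u)\,dx\,dy\,dt'\right|.
\]
Everything thus reduces to the trilinear bound $\big|\int_0^T\!\int v\,N(u)\big| \lesssim \sigma\,\|u\|_{Y^{\sigma_1,0,b}(I)}^3$.

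For this I would pass to the Bourgain-space duality pairing: since the modulation weights $\langle\tau - m\rangle^{\pm b}$ are reciprocal, Cauchy--Schwarz gives $\big|\int_{\mathbb{R}^3} v\,N(u)\big| \lesssim \|v\|_{X^{0,0,b,0}}\|N(u)\|_{X^{0,0,-b,0}} = \|u\|_{Y^{\sigma_1,0,b}}\|N(u)\|_{X^{0,0,-b,0}}$, the restriction to $[0,T]$ being absorbed into the localized norm via a smooth time cutoff. It then remains to prove the gain estimate $\|N(u)\|_{X^{0,0,-b,0}} \lesssim \sigma\,\|u\|_{Y^{\sigma_1,0,b}}^2$. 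The factor $\sigma$ comes from subadditivity of the weight: the bilinear symbol of $N(u)$ is $i\xi\big(e^{\sigma|\xi-\xi_1|}e^{\sigma|\xi_1|} - e^{\sigma|\xi|}\big)$, and the elementary inequality $e^x - 1 \le x e^x$ applied with $x = \sigma\theta$, where $\theta = |\xi-\xi_1| + |\xi_1| - |\xi| \ge 0$, gives
\[
e^{\sigma|\xi-\xi_1|}e^{\sigma|\xi_1|} - e^{\sigma|\xi|} \le \sigma\,\theta\,e^{\sigma|\xi-\xi_1|}e^{\sigma|\xi_1|}.
\]
The two exponentials on the right are exactly absorbed into the factors $A^{\sigma_1,0}u$, leaving $\sigma$ times the defect $\theta$ as an extra multiplier; this reduces the gain estimate to a version of Lemma \ref{goodest} for $w = A^{\sigma_1,0}u$ with the weight $\theta$ inserted.

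The main obstacle is this extra factor $\theta$. Since $\theta$ vanishes unless $\xi - \xi_1$ and $\xi_1$ have opposite signs, it is negligible (comparable to the smaller frequency) except in the high-high-to-low interaction, where both inputs are large and $\theta \sim 2\min(|\xi-\xi_1|,|\xi_1|)$ may be as large as the input frequencies themselves. There the extra derivative must be paid for out of dispersive smoothing: that regime is strongly non-resonant for the fifth-order symbol (the resonance function behaves like $\xi^5$), so the modulation weights in the $X^{s_1,s_2,b,\varepsilon}$ norm furnish far more smoothing than the single derivative $\theta$ costs -- this is precisely the budget that allows Lemma \ref{bilinear} to hold all the way down to $s_1 > -5/4$. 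Concretely I would split into the resonant and non-resonant regions, distribute $\theta$ among the two inputs, and invoke Lemma \ref{bilinear} so that the derivative loss is absorbed while the right-hand side remains $\|w\|_{X^{0,0,b,0}}^2 = \|u\|_{Y^{\sigma_1,0,b}}^2$. Combining the three displayed inequalities then gives \eqref{3.3.1}.
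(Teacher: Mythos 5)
Your overall scheme is exactly the one the paper follows (the approximate-conservation-law method of \cite{ST2015}): apply $A^{\sigma_1,0}$, derive the $L^2$ energy identity for $v=A^{\sigma_1,0}u$ with the commutator $N(u)$ of \eqref{nu} as the only surviving term, pair via Cauchy--Schwarz in the Bourgain weights, and reduce to a gain estimate $\|N(u)\|\lesssim\sigma\|u\|_{Y^{\sigma_1,0,b}}^2$ whose factor $\sigma$ comes from $e^x-1\le xe^x$. Up to that point your argument coincides with the paper's proof and with its Lemma \ref{lem3.3}.

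The gap is in your final step, which is precisely the step that needs care. After bounding the commutator symbol by $\sigma\,\theta\,e^{\sigma|\xi-\xi_1|}e^{\sigma|\xi_1|}$, you propose to ``distribute $\theta$ among the two inputs, and invoke Lemma \ref{bilinear} so that \dots the right-hand side remains $\|w\|_{X^{0,0,b,0}}^2$.'' This cannot work as stated. Distributing $\theta\le 2\langle\xi-\xi_1\rangle^{a}\langle\xi_1\rangle^{1-a}$ onto the inputs means the quantity to be bounded is $\|\partial_x(w_1w_2)\|_{X^{0,0,-\beta,0}}$ with $\widehat{w_1}=\langle\xi\rangle^{a}|\hat w|$, $\widehat{w_2}=\langle\xi\rangle^{1-a}|\hat w|$; Lemma \ref{bilinear} (at $s_1=s_2=0$) then returns $\|w\|_{X^{a,0,b,0}}\|w\|_{X^{1-a,0,b,0}}$, norms carrying \emph{positive} powers of $\langle\xi\rangle$ which are not controlled by $\|u\|_{Y^{\sigma_1,0,b}}^2$. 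Keeping zero-regularity norms on the right would instead require an asymmetric bilinear estimate (different indices on inputs and output) that Lemma \ref{bilinear} does not provide, and your fallback of proving it by hand via resonance/modulation analysis amounts to redoing the work of \cite{ILM2006} rather than using it. The paper's resolution (its Lemmas \ref{200} and \ref{lem3.3}) is to keep the \emph{output} weight in play: since
\begin{equation*}
\theta \le 2\min\bigl(|\xi-\xi_1|,|\xi_1|\bigr) \lesssim \frac{\langle\xi-\xi_1\rangle\langle\xi_1\rangle}{\langle\xi\rangle},
\end{equation*}
the weighted convolution one must control is exactly the dual (multiplier) formulation of Lemma \ref{bilinear} with $s_1=-1$, $s_2=0$, which is admissible because $-1>-5/4$. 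The point you are missing is that the one-derivative loss is absorbed not by splitting it between the inputs, but by invoking the bilinear estimate at \emph{negative} regularity, where the gain $\langle\xi-\xi_1\rangle\langle\xi_1\rangle$ on the inputs comes bundled with the loss $\langle\xi\rangle^{-1}$ on the output; with that observation no frequency-region case analysis is needed at all.
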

\noindent Before we may state the proof, let us first state some preliminary lemmas.  The first is an immediate consequence of Lemma 12 in \cite{SD2017} and the comments immediately following it:
\begin{lem}\label{200}
For $\sigma > 0$ and $\xi, \xi_{1} \in \mathbb{R}$, we have
\begin{align*}
e^{\sigma |\xi - \xi_1|} e^{\sigma |\xi_1|} - e^{\sigma |\xi|} \lesssim \sigma \frac{\langle \xi - \xi_1 \rangle \langle \xi_1 \rangle}{\langle \xi \rangle} e^{\sigma |\xi - \xi_1|} e^{\sigma |\xi_1|}.
\end{align*}
\end{lem}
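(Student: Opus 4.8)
The plan is to reduce the claimed exponential inequality to an elementary algebraic one. First I would record that the left-hand side is nonnegative: by the triangle inequality $|\xi| \le |\xi - \xi_1| + |\xi_1|$, so that $e^{\sigma|\xi|} \le e^{\sigma|\xi-\xi_1|}e^{\sigma|\xi_1|}$ and the quantity on the left is indeed $\ge 0$; this makes the statement a genuine upper bound on a nonnegative quantity. Setting $c = |\xi - \xi_1| + |\xi_1| - |\xi| \ge 0$, I would factor
\[
e^{\sigma|\xi-\xi_1|}e^{\sigma|\xi_1|} - e^{\sigma|\xi|} = e^{\sigma(|\xi-\xi_1|+|\xi_1|)}\left(1 - e^{-\sigma c}\right),
\]
and then invoke the elementary bound $1 - e^{-x} \le x$ (valid for $x \ge 0$) with $x = \sigma c$ to obtain
\[
e^{\sigma|\xi-\xi_1|}e^{\sigma|\xi_1|} - e^{\sigma|\xi|} \le \sigma c\, e^{\sigma(|\xi-\xi_1|+|\xi_1|)}.
\]
Since the right-hand side of the lemma carries the same factor $e^{\sigma|\xi-\xi_1|}e^{\sigma|\xi_1|} = e^{\sigma(|\xi-\xi_1|+|\xi_1|)}$, it then suffices to prove the $\sigma$-free inequality
\[
c = |\xi - \xi_1| + |\xi_1| - |\xi| \lesssim \frac{\langle \xi - \xi_1\rangle\langle\xi_1\rangle}{\langle\xi\rangle}.
\]

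Next I would establish this algebraic inequality by a case analysis on the signs of $\mu := \xi - \xi_1$ and $\nu := \xi_1$, observing that both $c$ and the target expression $\langle\mu\rangle\langle\nu\rangle/\langle\mu+\nu\rangle$ (recall $\xi = \mu + \nu$) are symmetric under $\mu \leftrightarrow \nu$. If $\mu$ and $\nu$ have the same sign, or one of them vanishes, then $|\mu + \nu| = |\mu| + |\nu|$, so $c = 0$ and there is nothing to prove. If they have opposite signs, then $c = |\mu| + |\nu| - \big||\mu| - |\nu|\big| = 2\min(|\mu|, |\nu|)$; by symmetry I may assume $|\mu| \le |\nu|$, so $c = 2|\mu|$ and $|\mu + \nu| = |\nu| - |\mu| \le |\nu|$. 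The latter gives $\langle\mu+\nu\rangle \le \langle\nu\rangle$, whence
\[
\frac{\langle\mu\rangle\langle\nu\rangle}{\langle\mu+\nu\rangle} \ge \langle\mu\rangle \ge |\mu| = \tfrac{c}{2},
\]
which is the desired bound with constant $2$. Chaining this with the reduction of the first paragraph completes the proof.

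Finally, a word on where the difficulty lies. The computation is entirely elementary, and the only substantive content is the algebraic inequality of the second paragraph, which is exactly Lemma 12 of \cite{SD2017}, while the exponential upgrade via $1 - e^{-x}\le x$ is precisely the content of the comments following it. Thus the "main obstacle" is really just isolating the correct reduction: once one factors out $e^{\sigma(|\xi-\xi_1|+|\xi_1|)}$ and linearizes the exponential, the problem collapses to a standard frequency-interaction estimate whose only subtlety is the opposite-sign case, where one must exploit the cancellation $|\mu + \nu| \le |\nu|$ to bound the denominator $\langle\mu+\nu\rangle$ above by $\langle\nu\rangle$.
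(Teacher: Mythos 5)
Your proof is correct, and every step checks out: the factorization $e^{\sigma|\xi-\xi_1|}e^{\sigma|\xi_1|}-e^{\sigma|\xi|}=e^{\sigma(|\xi-\xi_1|+|\xi_1|)}\bigl(1-e^{-\sigma c}\bigr)$ with $c=|\xi-\xi_1|+|\xi_1|-|\xi|\ge 0$, the linearization $1-e^{-x}\le x$, and the case analysis giving $c=2\min(|\mu|,|\nu|)\le 2\langle\mu\rangle\langle\nu\rangle/\langle\mu+\nu\rangle$ are all valid, yielding the lemma with explicit constant $2$. The difference from the paper is that the paper offers no argument at all: it simply declares the lemma ``an immediate consequence of Lemma 12 in \cite{SD2017} and the comments immediately following it,'' so your write-up is exactly the self-contained reconstruction of that outsourced material, as you yourself observe in your closing remark. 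What your version buys is a paper that stands on its own, with a transparent constant and a clear identification of where the cancellation enters (the opposite-sign case, via $\langle\mu+\nu\rangle\le\langle\nu\rangle$). One further observation worth recording: if in your reduction you keep the sharper elementary bound $1-e^{-x}\le\min(1,x)\le x^{\theta}$ for $\theta\in[0,1]$, the same argument gives the interpolated estimate with factor $\bigl(\sigma\langle\xi-\xi_1\rangle\langle\xi_1\rangle/\langle\xi\rangle\bigr)^{\theta}$, which is the form actually proved in \cite{SD2017} and is what one would need to improve the power of $\sigma$ in the approximate conservation law; the linear case $\theta=1$ that you prove is precisely what this paper uses.
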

\noindent This will be used to prove the following key estimate:
\begin{lem}\label{lem3.3}
Let $N(u)$ be as in equation \eqref{nu} for $\sigma_1 \geq 0$ and $\sigma_2 = 0$.  Then for $b$ and $\beta$ as in Lemma \ref{bilinear}, we have
\[
\| N(u) \|_{Y^{0, 0, -\beta}} \leq C \sigma \| u \|^{2}_{Y^{\sigma_1, 0, b}}.
\]
\end{lem}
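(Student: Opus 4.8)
The plan is to work entirely on the Fourier side and to reduce the claim to the bilinear estimate of Lemma~\ref{bilinear}, applied at the shifted regularity $s_1 = -1$, with the extra polynomial weights produced by Lemma~\ref{200} absorbed into a redefinition of the function. Since $\sigma_2 = 0$, the operator $A^{\sigma_1, 0}$ acts only through the $\xi$-variable, with symbol $e^{\sigma_1 |\xi|}$. When $\sigma_1 = 0$ the operator $A^{0,0}$ is the identity, so $N(u) = \partial_x[u^2 - u^2] = 0$ and the estimate is trivial; thus I may assume $\sigma_1 > 0$ and apply Lemma~\ref{200} with $\sigma = \sigma_1$.

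First I would compute the spacetime Fourier transform of $N(u)$. A direct calculation gives
\[
\widehat{N(u)}(\xi, \eta, \tau) = i\xi \int \left[ e^{\sigma_1 |\xi - \xi_1|} e^{\sigma_1 |\xi_1|} - e^{\sigma_1 |\xi|} \right] \hat{u}(\xi - \xi_1, \eta - \eta_1, \tau - \tau_1)\, \hat{u}(\xi_1, \eta_1, \tau_1)\, d\xi_1 d\eta_1 d\tau_1,
\]
so the symbol difference appearing in Lemma~\ref{200} is exactly the multiplier attached to the bilinear convolution. Taking absolute values and applying Lemma~\ref{200} with $\sigma = \sigma_1$ yields the pointwise bound
\[
\left| \widehat{N(u)}(\xi, \eta, \tau) \right| \lesssim \sigma_1 \frac{|\xi|}{\langle \xi \rangle} \int \langle \xi - \xi_1 \rangle e^{\sigma_1 |\xi - \xi_1|} |\hat{u}(\xi-\xi_1,\cdots)| \cdot \langle \xi_1 \rangle e^{\sigma_1 |\xi_1|} |\hat{u}(\xi_1,\cdots)|\, d\xi_1 d\eta_1 d\tau_1 .
\]

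The key step is to recognize the right-hand side as a bilinear expression in a single auxiliary function. I would introduce $w$ defined by $\hat{w}(\xi, \eta, \tau) = \langle \xi \rangle e^{\sigma_1 |\xi|} |\hat{u}(\xi, \eta, \tau)|$, which is nonnegative, so that the convolution above equals $\widehat{w^2}$ and $\frac{|\xi|}{\langle \xi \rangle} \widehat{w^2} = \langle \xi \rangle^{-1} |\widehat{\partial_x (w^2)}|$. Comparing weights directly, this gives
\[
\| N(u) \|_{Y^{0,0,-\beta}} = \| N(u) \|_{X^{0,0,-\beta,0}} \lesssim \sigma_1 \| \partial_x (w^2) \|_{X^{-1, 0, -\beta, 0}} .
\]
Applying Lemma~\ref{bilinear} at $s_1 = -1$, $s_2 = 0$, $\varepsilon = 0$ (an admissible choice for $\beta \in [9/20, 1/2)$ and $b > 1/2$, since $s = 1$ there) then bounds this by $\sigma_1 \| w \|^2_{X^{-1, 0, b, 0}}$. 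Finally, the defining weight $\langle \xi \rangle$ in $\hat{w}$ cancels against the $\langle \xi \rangle^{-1}$ weight of the $X^{-1}$ space, so $\| w \|_{X^{-1,0,b,0}} = \| A^{\sigma_1, 0} u \|_{X^{0,0,b,0}} = \| u \|_{Y^{\sigma_1, 0, b}}$, completing the estimate with $\sigma = \sigma_1$.

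I expect the main obstacle to be the bookkeeping in this reduction to $s_1 = -1$: one must verify that the output derivative $\partial_x$ pairs with the $\langle \xi \rangle^{-1}$ weight of the $X^{-1}$-space to reproduce exactly the leftover factor $|\xi|/\langle \xi \rangle$ from Lemma~\ref{200}, while the two polynomial factors $\langle \xi - \xi_1 \rangle$ and $\langle \xi_1 \rangle$ are absorbed into $w$ and then exactly undone by the two $\langle \xi \rangle^{-1}$ input weights. Checking that $\varepsilon = 0$ and $\beta = 9/20$ remain admissible in Lemma~\ref{bilinear} at the shifted index $s_1 = -1$, rather than the $s_1 = 0$ used elsewhere, is the one place where the hypotheses genuinely need to be revisited.
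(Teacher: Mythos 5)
Your proof is correct and follows essentially the same route as the paper: both arguments bound the symbol difference $e^{\sigma_1|\xi-\xi_1|}e^{\sigma_1|\xi_1|}-e^{\sigma_1|\xi|}$ via Lemma~\ref{200} and then invoke Lemma~\ref{bilinear} with $s_1=-1$, $s_2=0$, $\varepsilon=0$ (where $s=1$ makes $\beta\in[9/20,1/2)$ admissible), the only difference being cosmetic: you absorb the weights $\langle\xi-\xi_1\rangle e^{\sigma_1|\xi-\xi_1|}$ into an auxiliary function $w$, while the paper equivalently rewrites the bilinear estimate in its weighted-convolution form and matches terms. Your version also fixes the paper's notational slips (the missing factor of $\sigma$ and the superscript on the left-hand norm in the paper's final line).
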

\begin{proof}
We first observe that the inequality in Lemma \ref{bilinear}, for the case $\varepsilon = 0$, is equivalent to
\begin{align*}
& \left\| \xi \lambda(s_1, s_2, -\beta, 0) \int \frac{\hat{f}(\xi - \xi_1, \eta - \eta_1, \tau - \tau_1)}{\langle \xi - \xi_1 \rangle^{s_1} \langle \eta - \eta_1 \rangle^{s_2} \langle \phi(\xi - \xi_1, \eta - \eta_1, \tau - \tau_1) \rangle^{b}} \right. \times \\
& \times \left. \frac{\hat{g}(\xi_1, \eta_1, \tau_1)}{\langle \xi_1 \rangle^{s_1} \langle \eta_1 \rangle^{s_2} \langle \phi(\xi_1, \eta_1, \tau_1) \rangle^{b}} \ d\xi_1 d\eta_1 d\tau_1 \right\|_{L^{2}_{\xi,\eta}} \lesssim \| f \|_{L^{2}_{x,y}} \| g \|_{L^{2}_{x,y}}
\end{align*}
where we denote $\phi(\tau, \xi, \eta) = \langle \tau - m(\xi,\eta)\rangle$.
With this in mind, we observe that the left side of the inequality in Lemma \ref{lem3.3} can be estimated by Lemma \ref{200} as
\begin{align*}
\| N(u) \|_{Y^{\sigma_1, 0, -\beta}} & \lesssim \sigma \left\| \frac{\xi \langle \xi \rangle^{-1}}{\langle \phi(\tau, \xi, \eta)\rangle^{\beta}} \int \frac{e^{\sigma_1 |\xi - \xi_1|} \hat{u}(\xi - \xi_1, \eta - \eta_1, \tau - \tau_1)}{\langle \xi - \xi_1 \rangle^{-1}} \right. \times \\
& \qquad \qquad \times \left. \frac{e^{\sigma_1 |\xi_1|} \hat{u}(\xi_1, \eta_1, \tau_1)}{\langle \xi_1 \rangle^{-1}}  \ d\xi_1 d\eta_1 \right\|_{L^{2}_{\xi,\eta}}.
\end{align*}
If we apply Lemma \ref{bilinear} with $s_1 = -1$, $s_2 = 0$, it will follow from the comments above that
\[
\| N(u) \|_{Y^{\sigma_1, 0, -\beta}} \lesssim \| u \|_{Y^{\sigma_1, 0, b}}^{2}.
\]
\end{proof}

\subsection{Proof of Proposition \ref{th03}}
Begin by applying the operator $A^{\sigma_1, 0}$ to equation \eqref{p1}.  If we let $U = A^{\sigma_1, 0} u$, then equation \eqref{p1} becomes
\[
U_{t} - U_{xxxxx} + \partial_{x}^{-1} U_{yy} + U U_{x} = N(u),
\]
where $N(u)$ is as defined in Lemma \ref{lem3.3}.  Multiplying this by $U$ and integrating with respect to the spatial variables, we obtain
\[
\int U U_{t} - U U_{xxxxx} + U \partial^{-1}_{x} U_{yy} + U^2 U_{x}\ dxdy = \int U N(u)\ dxdy.
\]
If we apply integration by parts, we may rewrite the left-hand side as
\[
\frac{d}{dt} \int \frac{1}{2} U^{2}\ dxdy + \int U_{xx} U_{xxx} dx dy - \int U_{y} \partial_{x}^{-1} U_{y}\ dxdy + \int U^2 U_{x}\ dxdy,
\]
which can then be rewritten as
\begin{align*}
& \frac{1}{2} \frac{d}{dt} \int U^{2}\ dxdy + \frac{1}{2} \int \partial_{x}(U_{xx}^{2}) dx dy - \frac{1}{2} \int \partial_{x} [( \partial_{x}^{-1} U_{y})^2] \ dxdy \\ 
& \qquad \qquad + \frac{1}{3} \int \partial_{x} (U^3) \ dxdy.
\end{align*}
For $U$ and its derivatives vanishing at infinity, we thus obtain the formal identity
\[
\frac{d}{dt} \int U^{2}(x,y,t)\ dxdy = 2\int U(x,y,t) N(u)(x,y,t)\ dx dy.
\]
Integrating with respect to time yields
\begin{align*}
\int U^{2}(x,y,t)\ dxdy & = \int U^{2}(x,y,0)\ dxdy \\
& \qquad + 2\int_{0}^{t} \int U(x,y,t') \partial_{x} N(u)(x,y,t')\ dx dy dt'.
\end{align*}
Applying Cauchy-Schwarz and the definition of $U$, we obtain
\[
\| u(t) \|_{G^{\sigma_1, 0}}^{2} \leq \| f \|_{G^{\sigma_1, 0}}^{2} + \| u \|_{Y^{\sigma_1, 0, b}} \| N(u) \|_{Y^{0, 0, -\beta}(I)},
\]
where $\beta$ is as defined previously.  If we now apply Proposition \ref{th03} and the fact that $\beta < 1/2 < b$, we can further estimate this by
\begin{equation}\label{almost}
\| u(t) \|_{G^{\sigma_1, 0}}^{2} \leq \| f \|_{G^{\sigma_1, 0}}^{2} + C\sigma \| u \|_{Y^{\sigma_1, 0, b}}^{3},
\end{equation}
as desired.

\subsection{Proof of Theorem \ref{the1.3}}

With the tools established in the previous section, we may begin the proof of Theorem \ref{the1.3}.  By the embedding in equation \eqref{embed}, it suffices to consider the case $\sigma_2 = 0$.  To begin, let us first suppose that $T^{*}$ is the supremum of the set of times $T$ for which
\[
 u \in C([0,T]; G^{\sigma_1,0}).
\]
If $T^{*} = \infty$, there is nothing to prove, so let us assume that $T^{*} < \infty$.  In this case, it suffices to prove that
\begin{equation}\label{3.3.6}
u \in C\left([0, T], G^{\sigma(T), 0} \right)
\end{equation}
for some $\sigma(T) > 0$ and all $T > T^{*}$.  To show that this is the case, we will use Theorem \ref{the1.2} and Proposition \ref{th03} to construct a solution which exists over subintervals of width $\delta$, using the parameter $\sigma$ to control the growth of the norm of the solution.  Thus, the desired result will follow from the following proposition:

\begin{prop}\label{finalprop}
Let $T > 0$ and $\delta > 0$ be numbers such that $n \delta \leq T < (n + 1) \delta$.  Then the solution $u$ to the Cauchy problem \eqref{p1} satisfies
\begin{equation}\label{ineq1}
\sup_{t \in [0, n \delta]} \| u(t) \|_{G^{\sigma(T), 0}}^{2} \leq \| f \|_{G^{\sigma(T), 0}}^{2} + 2^3 C \sigma(T) n \| f \|_{G^{\sigma_1, 0}}^{3}
\end{equation}
and
\begin{equation}\label{ineq2}
\sup_{t \in [0, n \delta]} \| u(t) \|_{G^{\sigma(T), 0}}^{2} \leq 4 \| u(t) \|_{G^{\sigma_1, 0}}^{2}
\end{equation}
if
\[
\sigma(T) \leq \sigma_1 \quad \textrm{and} \quad \sigma(T) \leq \frac{C (1 + \| f \|_{G^{\sigma_1, 0}})^{1/\gamma - 1}}{T}
\]
for some constant $C > 0$.
\end{prop}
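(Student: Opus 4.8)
The plan is to establish \eqref{ineq1} and \eqref{ineq2} simultaneously by induction on the step count $k \in \{0, 1, \dots, n\}$, iterating the approximate conservation law of Proposition \ref{th03} across the consecutive intervals $I_k = [k\delta, (k+1)\delta]$. Write $M = \| f \|_{G^{\sigma_1, 0}}$. Since $\sigma(T) \leq \sigma_1$, the embedding \eqref{embed} gives $\| f \|_{G^{\sigma(T), 0}} \leq M$, which is what allows the data norm at the reduced radius to be replaced by $M$ in the error term of \eqref{ineq1}. I take $\delta$ to be the local existence time furnished by Theorem \ref{the1.2}, a fixed inverse power of $1 + M$, so that on each subinterval the solution may be reconstructed from its value at the left endpoint and the same $\delta$ serves at every restart.

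First I would record the single-step estimate. Fixing $k$ and regarding $u(k\delta)$ as initial data on $I_k$, Theorem \ref{the1.2} reproduces the solution there and controls its Bourgain norm by the datum, giving $\| u \|_{Y^{\sigma(T), 0, b}(I_k)} \lesssim \| u(k\delta) \|_{G^{\sigma(T), 0}}$. Invoking the a priori bound \eqref{ineq2} as an induction hypothesis valid up through step $k$ bounds this datum by $2M$, so that feeding it into Proposition \ref{th03} applied on $I_k$ controls the growth of the squared $G^{\sigma(T),0}$ norm over one step by $2^3 C \sigma(T) M^3$.

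Summing the single-step bound from the initial data through step $n$ telescopes to
\[
\sup_{t \in [0, n\delta]} \| u(t) \|_{G^{\sigma(T), 0}}^2 \leq \| f \|_{G^{\sigma(T),0}}^2 + 2^3 C \sigma(T)\, n\, M^3,
\]
which is precisely \eqref{ineq1}. To close the induction and deduce \eqref{ineq2}, I would use the hypotheses on $\sigma(T)$: since $n\delta \leq T$ and $\delta$ is comparable to a fixed negative power of $1 + M$ dictated by Theorem \ref{the1.2}, the number of steps obeys $n \lesssim T/\delta$, and the stated smallness condition $\sigma(T) \leq C(1 + M)^{1/\gamma - 1}/T$ is exactly calibrated to force the cumulative error $2^3 C \sigma(T) n M^3$ to be at most $3M^2$. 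Combined with $\| f \|_{G^{\sigma(T),0}}^2 \leq M^2$, this yields $\sup_{[0,n\delta]} \| u \|_{G^{\sigma(T),0}}^2 \leq 4M^2$, which is \eqref{ineq2} and advances the induction.

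The main obstacle is the circularity between the two inequalities: the clean per-step error $\sim \sigma(T) M^3$ in \eqref{ineq1} is available only once we know the restart datum $\| u(k\delta) \|_{G^{\sigma(T),0}}$ has not yet exceeded $2M$, which is \eqref{ineq2} on the preceding intervals, while \eqref{ineq2} is itself deduced from the summed bound \eqref{ineq1}. Resolving this forces both statements through a single induction, and — because the fixed step length $\delta$ must remain a valid local existence time at every restart — it requires that the a priori bound keep each restart datum of size $\lesssim M$, so that Theorem \ref{the1.2} applies with the same $\delta$ at each step. Once this interlock is in place, confirming that the smallness condition on $\sigma(T)$ balances the accumulated error $n\sigma(T)M^3$ against the admissible growth $M^2$ is a matter of bookkeeping with $n \approx T/\delta$.
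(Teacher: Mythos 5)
Your proposal follows essentially the same argument as the paper: induction over the subintervals $[k\delta,(k+1)\delta]$ with $\delta$ the local existence time from Theorem \ref{the1.2}, a per-step application of the approximate conservation law (Proposition \ref{th03}) with the Bourgain norm controlled by the restart datum, telescoping to obtain \eqref{ineq1}, and closing the bootstrap \eqref{ineq2} via $n \lesssim T/\delta$ and the smallness condition on $\sigma(T)$. The circularity you flag between the two inequalities is resolved in the paper exactly as you propose, by running both through a single induction.
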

\begin{proof}
By induction on $n$.  The base case $n = 1$ follows from equation \eqref{almost}, Theorem \ref{the1.2}, and the embedding $G^{\sigma_1, \sigma_2} \hookrightarrow G^{\sigma_1', \sigma_2'}$ when $\sigma_1' \leq \sigma_1$ and $\sigma_2' \leq \sigma_2$.  Suppose, then, that the result holds for $n \leq k$.  The inductive hypothesis then tells us that
\[
\sup_{t \in [0, k \delta]} \| u(t) \|_{G^{\sigma(T), 0}} \leq \| f \|_{G^{\sigma(T), 0}}^{2} + 2^3 C \sigma(T) k \| f \|_{G^{\sigma_1, 0}}^{3}
\]
and
\[
\sup_{t \in [0, k \delta]} \| u(t) \|_{G^{\sigma(T), 0}}^{2} \leq 4 \| f \|_{G^{\sigma_1, 0}}^{2}
\]

If we apply the inductive hypothesis on the interval $[k\delta, (k+1) \delta]$, then
\[
\sup_{t \in [k \delta, (k+1) \delta]} \| u(t) \|_{G^{\sigma(T), 0}} \leq \| u(k\delta) \|_{G^{\sigma(T), 0}}^{2} + 2^3 C \sigma(T) \| u(k\delta) \|_{G^{\sigma_1, 0}}^{3}
\]
and
\[
\sup_{t \in [k \delta, (k + 1) \delta]} \| u(t) \|_{G^{\sigma(T), 0}}^{2} \leq 4 \| u(k\delta) \|_{G^{\sigma_1, 0}}^{2}.
\]
If we apply equations \eqref{ineq1} and \eqref{ineq2} to these, we get
\[
\| u(k \delta) \|_{G^{\sigma(T), 0}} \leq \| f \|_{G^{\sigma(T), 0}}^{2} + 2^3 C k \sigma(T) \| u(k\delta) \|_{G^{\sigma_1, 0}}^{3}
\]
and
\[
\| u(k \delta) \|_{G^{\sigma_1, 0}}^2 \leq 4 \| f \|_{G^{\sigma_1, 0}}^2.
\]
Combining these together, we obtain
\begin{align*}
\sup \| u(t) \|_{G^{\sigma(T),0}} & \leq \left( \| f \|_{G^{\sigma(T), 0}}^{2} + 2^3 C k \sigma(T) \| f \|_{G^{\sigma_1, 0}}^{3}\right) + 2^3 C \sigma(T) \| f \|_{G^{\sigma_1, 0}}^{3} \\
& = \| f \|_{G^{\sigma(T), 0}}^{2} + 2^3 C (k+1) \sigma(T) \| f \|_{G^{\sigma_1, 0}}^{3}.
\end{align*}
It follows that
\[
\sup_{t \in [0, (k+1) \delta]} \| u(t) \|_{G^{\sigma(T),0}} \leq \| f \|_{G^{\sigma(T), 0}}^{2} + 2^3 C (k+1) \sigma(T) \| f \|_{G^{\sigma_1, 0}}^{3}.
\]
To complete the proof, we need to show that
\begin{equation}\label{complete}
\sup_{t \in [0, (k+1) \delta]} \| u(t) \|_{G^{\sigma(t),0}}^2 \leq 4 \| f \|_{G^{\sigma_1, 0}}^2.
\end{equation}
Next, we observe that the assumption $n \delta \leq T < (n+1)\delta$ implies that
\[
n \leq \frac{T}{\delta} < n + 1 \leq \frac{T}{\delta} + 1 \leq \frac{2T}{\delta}.
\]
Since $k + 1 \leq n + 1$, we have
\begin{align*}
2^3 C (k+1) \sigma(T) \| f \|_{G^{\sigma_1, 0}} & \leq 2^4 C \frac{2T}{\delta} \sigma(T) \| f \|_{G^{\sigma_1, 0}} \\
& \leq \frac{C \| f \|_{G^{\sigma_1, 0}}}{(1 + \| f \|_{G^{\sigma_1, 0}})^{1/\gamma}} T \sigma(T).
\end{align*}
Recalling that $1/\gamma > 1$, we thus have
\[
2^3 C (k+1) \sigma(T) \| f \|_{G^{\sigma_1, 0}} \leq \frac{CT}{(1 + \| f \|_{G^{\sigma_1, 0}})^{1/\gamma - 1}} \sigma(T).
\]
Thus, for equation \eqref{complete} to hold, it suffices to have
\[
\sigma(T) \leq \frac{C (1 + \| f \|_{G^{\sigma_1, 0}})^{1/\gamma - 1}}{T}.
\]
Since we have shown that the result holds for $n = 1$, and we have shown that the result for $n = k$ implies it for $n = k+1$, then the result holds for all $n$.  This completes the proof of Proposition \ref{finalprop}, from which Theorem \ref{the1.3} follows as an immediate corollary.

\end{proof}

\bibliographystyle{siam}
\bibliography{gg}

\end{document}